\newtheorem{theorem}{Theorem}[section]
\newtheorem{definition}[theorem]{Definition}
\newtheorem{problem}[theorem]{Problem}
\title{This is the title}
\begin{document}
\hrule\hrule\hrule\hrule\hrule
\vspace{0.3cm}	
\begin{center}
{\bf{Continuous Donoho-Elad Spark Uncertainty Principle}}\\
\vspace{0.3cm}
\hrule\hrule\hrule\hrule\hrule
\vspace{0.3cm}
\textbf{K. MAHESH KRISHNA}\\
School of Mathematics and Natural Sciences\\
Chanakya University Global Campus\\
NH-648, Haraluru Village\\
Devanahalli Taluk, 	Bengaluru  North District\\
Karnataka State 562 110 India\\
Email: kmaheshak@gmail.com\\

Date: \today
\end{center}

\hrule\hrule
\vspace{0.5cm}
\textbf{Abstract}:  Donoho and Elad \textit{[Proc. Natl. Acad. Sci. USA, 2003]} introduced the important notion of the spark of a frame, using which they derived a fundamental uncertainty principle. Based on spark, they also provided a necessary and sufficient condition for the uniqueness of sparse solutions to the NP-hard $\ell_0$-minimization problem. In this nano note, we show that the notion of spark can be extended to linear maps whose domains are measure spaces. Using this generalization, we derive an uncertainty principle and provide a sufficient condition for the existence of sparse solutions to linear systems on measure spaces.

\textbf{Keywords}: Frame, Uncertainty principle, Sparse solution.

\textbf{Mathematics Subject Classification (2020)}: 94A12, 42C15, 94A08, 28A05.\\

\hrule

\hrule
\section{Introduction}
Let $\mathcal{H}$ be a  finite dimensional Hilbert  space over $\mathbb{K}$ ($\mathbb{C}$ or $\mathbb{R}$). Recall that \cite{BEDETTOFICKUS} a collection of nonzero elements $\{\tau_j\}_{j=1}^n$ in $\mathcal{H}$ is said to be a \textbf{frame} (also known as \textbf{dictionary}) $\mathcal{H}$ if there are $r, s >0$ such that 
\begin{align*}
	r\|h\|^2\leq \sum_{j=1}^{n}|\langle h,  \tau_j \rangle|^2\leq s \|h\|^2, \quad \forall h \in \mathcal{H}.
\end{align*}
It is well-known that a collection $\{\tau_j\}_{j=1}^n$ in $\mathcal{H}$  is a frame for $\mathcal{H}$  if and only if $\{\tau_j\}_{j=1}^n$ spans $\mathcal{H}$ \cite{HANKORNELSON}.  A frame $\{\tau_j\}_{j=1}^n$ for  $\mathcal{H}$ is said to be \textbf{normalized} if $\|\tau_j\|=1$ for all $1\leq j \leq n$. Note that any frame can be normalized by dividing each element by its norm.
Given a frame $\{\tau_j\}_{j=1}^n$ for  $\mathcal{H}$, we define the analysis operator
\begin{align*}
	\theta_\tau:\mathcal{H}\ni h \mapsto \theta_\tau h\coloneqq(\langle h,  \tau_j \rangle)_{j=1}^n \in \mathbb{K}^n.
\end{align*}
Adjoint of the analysis operator is known as the synthesis operator whose equation is 
\begin{align*}
	\theta_\tau^*: \mathbb{K}^n \ni (a_j)_{j=1}^n \mapsto \theta_\tau^*(a_j)_{j=1}^n\coloneqq\sum_{j=1}^{n}a_j\tau_j \in \mathcal{H}.
\end{align*}
Given $d\in \mathbb{K}^n$, let $\|d\|_0$ be the number of nonzero entries in $d$. Central problem which occurs in many situations    is the following $\ell_0$-minimization problem: 
\begin{problem} \label{P0}
	Let $\{\tau_j\}_{j=1}^n$ be a normalized frame for  $\mathcal{H}$.  Given $h \in \mathcal{H}$, solve 	
	\begin{align*}
		\operatornamewithlimits{minimize}_{d\in \mathbb{K}^n}\|d\|_0 \quad \text{ subject to } \quad \theta_\tau^*d=h.
	\end{align*}
\end{problem}
Recall that $c \in \mathbb{K}^n$ is said to be a unique solution to Problem \ref{P0} if it satisfies following two conditions.
\begin{enumerate}[\upshape(i)]
	\item $\theta_\tau^*c=h$.
	\item If $d \in \mathbb{K}^n$ satisfies $\theta_\tau^*d=h$, then 
	\begin{align*}
		\|d\|_0>\|c\|_0.
	\end{align*}
\end{enumerate}
In 1995, Natarajan showed that Problem \ref{P0} is NP-Hard \cite{NATARAJAN}. Therefore solution to Problem \ref{P0} has to be obtained using other methods. Work which is built around Problem \ref{P0} is known as \textbf{sparseland}  (term due to Elad \cite{ELAD}) or \textbf{compressive sensing} or \textbf{compressed sensing}.

As the operator $\theta_\tau^*$ is surjective, for a given $h \in \mathcal{H}$, there is always a  $d\in \mathbb{K}^n$ such that  $\theta_\tau^*d=h.$ Thus the central problem is when solution to Problem \ref{P0} is unique. One of the greatest results of Donoho and Elad \cite{DONOHOELAD} with regard to this is using the notion of spark defined as follows. In the paper, given a subset $M\subseteq \mathbb{N}$, the cardinality of $M$ is denoted by $o(M)$.
\begin{definition}\cite{DONOHOELAD}\label{SPARK}
	Given a  normalized frame $\{\tau_j\}_{j=1}^n$ for  $\mathcal{H}$, the \textbf{spark} of 	$\{\tau_j\}_{j=1}^n$ is defined as 
	\begin{align*}
		\operatorname{Spark}(\{\tau_j\}_{j=1}^n)&\coloneqq \min\{o(M): M \subseteq \{1, \dots, n\}, \{\tau_j\}_{j\in M} \text{ is linearly dependent}\}\\
		&=\min\{\|d\|_0:d \in \ker(\theta_\tau^*), d \neq 0\}.
	\end{align*}
\end{definition}
In 2003,  Donoho and Elad derived the following breakthrough spark uncertainty principle \cite{DONOHOELAD}.
\begin{theorem} \cite{DONOHOELAD} \label{DESU} (\textbf{Donoho-Elad Spark Uncertainty Principle}) 
	Let $\{\tau_j\}_{j=1}^n$ be a   normalized frame for  $\mathcal{H}$. 	If $a, b \in \mathbb{K}^n$ are distinct and $\theta _\tau^* a=\theta_\tau^* b $, then 
	\begin{align*}
		\|a\|_0+\|b\|_0 \geq \operatorname{Spark}(\{\tau_j\}_{j=1}^n).
	\end{align*}
\end{theorem}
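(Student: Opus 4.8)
The plan is to prove the statement directly from the second characterization of spark given in Definition \ref{SPARK}, namely
\begin{align*}
\operatorname{Spark}(\{\tau_j\}_{j=1}^n) = \min\{\|d\|_0 : d \in \ker(\theta_\tau^*),\ d \neq 0\}.
\end{align*}
The central observation is that if $\theta_\tau^* a = \theta_\tau^* b$ with $a \neq b$, then the difference $a - b$ lies in the kernel of the synthesis operator and is nonzero. Thus $\|a - b\|_0 \geq \operatorname{Spark}(\{\tau_j\}_{j=1}^n)$ by the definition of spark as a minimum over nonzero kernel elements.

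First I would set $d \coloneqq a - b$, observe that $\theta_\tau^* d = \theta_\tau^* a - \theta_\tau^* b = 0$ by linearity, and note $d \neq 0$ since $a \neq b$. Applying the spark characterization gives $\|a - b\|_0 \geq \operatorname{Spark}(\{\tau_j\}_{j=1}^n)$. The remaining step, which does the real work, is the elementary inequality
\begin{align*}
\|a - b\|_0 \leq \|a\|_0 + \|b\|_0.
\end{align*}
This follows from the fact that the support of $a - b$ is contained in the union of the supports of $a$ and $b$: if the $j$-th entry of $a - b$ is nonzero, then at least one of the $j$-th entries of $a$ or $b$ must be nonzero, so the number of nonzero coordinates of $a - b$ is at most the sum of the counts for $a$ and $b$. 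Chaining the two inequalities yields $\|a\|_0 + \|b\|_0 \geq \|a - b\|_0 \geq \operatorname{Spark}(\{\tau_j\}_{j=1}^n)$, which is exactly the claim.

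I do not anticipate a genuine obstacle here, as this is a short combinatorial argument once the equivalent kernel-based formula for the spark is in hand; the only point requiring mild care is the support inclusion, which I would justify by a coordinatewise check rather than invoking any deeper property of the $\ell_0$ "norm". The entire proof rests on recasting the hypothesis $\theta_\tau^* a = \theta_\tau^* b$ as a statement about $\ker(\theta_\tau^*)$ and then relating the sparsity of the difference to the individual sparsities via the triangle-type inequality for $\|\cdot\|_0$.
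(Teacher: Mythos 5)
Your proof is correct and follows essentially the same route as the paper: although the paper cites Theorem \ref{DESU} to Donoho--Elad without reproving it, its own proof of the continuous analogue is exactly your argument --- pass to the nonzero kernel element $a-b$ (there $f-g$), apply the kernel characterization of spark, and bound the support of the difference via the inclusion $\operatorname{supp}(a-b)\subseteq \operatorname{supp}(a)\cup\operatorname{supp}(b)$. There is nothing to add.
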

In the same paper \cite{DONOHOELAD}, Donoho and Elad also gave a characterization for the solution of Problem  \ref{P0} using spark. 
\begin{theorem}\cite{DONOHOELAD, DAVENPORTDUARTEELDARKUTYNIOK}\label{DE} (\textbf{Donoho-Elad Spark Sparsity Theorem})
	Let $\{\tau_j\}_{j=1}^n$ be a   normalized frame for  $\mathcal{H}$. 
	\begin{enumerate}[\upshape(i)]
		\item For every $h \in \mathcal{H}$ and every $1\leq k \leq n$, there exists atmost one vector $c\in \mathbb{K}^n$ such that 
		\begin{align*}
			h=\theta_\tau^*c \quad \text{satisfying} \quad \|c\|_0\leq k
		\end{align*}
		if and only if 
		\begin{align*}
			\operatorname{Spark}(\{\tau_j\}_{j=1}^n)>2k.	
		\end{align*}
		\item If $h \in \mathcal{H}$ can be written as  $	h=\theta_\tau^*c$ for some  $c\in \mathbb{K}^n$ satisfying 
		\begin{align*}
			\|c\|_0<\frac{1}{2}\operatorname{Spark}(\{\tau_j\}_{j=1}^n),
		\end{align*}	
		then $c$ is the unique solution to Problem \ref{P0}.
	\end{enumerate}
\end{theorem}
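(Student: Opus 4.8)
The plan is to obtain both statements as consequences of the spark uncertainty principle (Theorem \ref{DESU}), which is the only substantive tool needed. I would first separate out the three implications to be proved: the backward direction of (i), the forward direction of (i), and statement (ii); two of these are immediate from Theorem \ref{DESU}, and only one needs a construction.

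For the backward direction of (i), I would argue by contradiction. Assuming $\operatorname{Spark}(\{\tau_j\}_{j=1}^n)>2k$, suppose $c,c'\in\mathbb{K}^n$ satisfy $\theta_\tau^*c=\theta_\tau^*c'=h$ with $\|c\|_0\leq k$ and $\|c'\|_0\leq k$. If $c\neq c'$, Theorem \ref{DESU} gives $\|c\|_0+\|c'\|_0\geq\operatorname{Spark}(\{\tau_j\}_{j=1}^n)>2k$, contradicting $\|c\|_0+\|c'\|_0\leq 2k$; hence $c=c'$, the desired uniqueness. Statement (ii) follows in the same spirit: given $h=\theta_\tau^*c$ with $\|c\|_0<\tfrac12\operatorname{Spark}(\{\tau_j\}_{j=1}^n)$ and any $d\neq c$ with $\theta_\tau^*d=h$, Theorem \ref{DESU} yields $\|d\|_0\geq\operatorname{Spark}(\{\tau_j\}_{j=1}^n)-\|c\|_0>\|c\|_0$, where the last inequality uses $2\|c\|_0<\operatorname{Spark}(\{\tau_j\}_{j=1}^n)$; this is precisely the definition of $c$ being the unique solution to Problem \ref{P0}.

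The only direction requiring a genuine construction is the forward implication of (i), which I would prove by contraposition: assuming $\operatorname{Spark}(\{\tau_j\}_{j=1}^n)\leq 2k$, I must produce some $h$ admitting two distinct representations of $\|\cdot\|_0\leq k$. By Definition \ref{SPARK} I would choose a nonzero $d\in\ker(\theta_\tau^*)$ with $\|d\|_0=\operatorname{Spark}(\{\tau_j\}_{j=1}^n)\leq 2k$, set $S=\operatorname{supp}(d)$, and partition $S=S_1\sqcup S_2$ with $|S_1|\leq k$ and $|S_2|\leq k$. Defining $a$ to agree with $d$ on $S_1$ and vanish off $S_1$, and $b$ to agree with $-d$ on $S_2$ and vanish off $S_2$, one checks entrywise that $a-b=d$, so $\theta_\tau^*a=\theta_\tau^*b=:h$, while $a\neq b$ and $\|a\|_0,\|b\|_0\leq k$; this contradicts uniqueness.

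The main obstacle is exactly this last splitting step. The uncertainty principle only tells us that two representations must have large combined support, so to reverse it one has to manufacture, from a single minimal kernel vector, a pair of genuinely distinct small-support vectors whose difference is that kernel vector. The partition of $\operatorname{supp}(d)$ into two blocks of size at most $k$ is what the hypothesis $\operatorname{Spark}(\{\tau_j\}_{j=1}^n)\leq 2k$ makes available, and the delicate bookkeeping is verifying simultaneously that $a\neq b$ (which follows from $a-b=d\neq 0$) and that both support bounds hold.
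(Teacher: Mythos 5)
Your proof is correct and follows essentially the same route as the paper: your arguments for the backward direction of (i) and for (ii) are exactly the paper's proof of the continuous analogue (Theorem \ref{ADE}) specialized to $\theta_\tau^*$ and counting measure, while your construction for the forward direction of (i) --- splitting a minimal kernel vector $d$ with $\|d\|_0=\operatorname{Spark}(\{\tau_j\}_{j=1}^n)\leq 2k$ into two $k$-sparse vectors $a,b$ with $a-b=d$ --- is precisely the technique from \cite{DAVENPORTDUARTEELDARKUTYNIOK} that the paper itself identifies in its closing remark as the proof of the converse of (i). The paper states Theorem \ref{DE} without proof (it is cited), so the relevant comparison is with its proof of Theorem \ref{ADE} together with that cited splitting technique, and your argument matches both.
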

In this note, we show that Definition \ref{SPARK} can be extended largely. Using this, we show that Theorems \ref{DESU} and \ref{DE} have  continuous extensions.

\section{Continuous  Spark}
	Let $(\Omega, \mu)	$ be a measure space and let 
	\begin{align*}
		\mathcal{M}(\Omega, \mu)\coloneqq \{f: \Omega \to \mathbb{K} \text{ is measurable}\}.
	\end{align*}
Let $\mathcal{V}$ be a vector space over $\mathbb{K}$ and let $\mathcal{W}$ be a subspace of $\mathcal{M}(\Omega, \mu)$. Given a linear map $A:\mathcal{W} \to \mathcal{V}$, we define the spark of $A$ as 
\begin{align*}
	\operatorname{Spark}(A)	\coloneqq \inf\{\mu(\operatorname{supp}(f)): f \in \ker(A), f \neq 0\}.
\end{align*}
We now have continuous  version of Theorem \ref{DESU}.
\begin{theorem} (\textbf{Continuous  Donoho-Elad Spark Uncertainty Principle})
Let $A:\mathcal{W} \to \mathcal{V}$ be a linear map.	If $f, g  \in \mathcal{W}$ are distinct and $Af=Ag $, then 
\begin{align*}
	\mu(\operatorname{supp}(f))+ \mu(\operatorname{supp}(g)) \geq \operatorname{Spark}(A).
\end{align*}
\end{theorem}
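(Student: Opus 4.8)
The plan is to mimic the discrete proof of Theorem \ref{DESU}. The key observation is that $f-g$ lies in $\ker(A)$ and is nonzero (since $f,g$ are distinct), so $\mu(\operatorname{supp}(f-g)) \geq \operatorname{Spark}(A)$ by definition of the spark as an infimum over nonzero kernel elements. It then suffices to bound $\mu(\operatorname{supp}(f-g))$ from above by $\mu(\operatorname{supp}(f)) + \mu(\operatorname{supp}(g))$.

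**First I would** establish the pointwise support inclusion. For any $\omega \in \Omega$, if $(f-g)(\omega) \neq 0$ then we cannot have both $f(\omega)=0$ and $g(\omega)=0$, since that would force $(f-g)(\omega)=0$. Hence
\begin{align*}
	\operatorname{supp}(f-g) \subseteq \operatorname{supp}(f) \cup \operatorname{supp}(g).
\end{align*}
**Then I would** apply monotonicity and finite subadditivity of the measure $\mu$ to this inclusion, obtaining
\begin{align*}
	\mu(\operatorname{supp}(f-g)) \leq \mu(\operatorname{supp}(f) \cup \operatorname{supp}(g)) \leq \mu(\operatorname{supp}(f)) + \mu(\operatorname{supp}(g)).
\end{align*}
Combining this chain with the lower bound $\mu(\operatorname{supp}(f-g)) \geq \operatorname{Spark}(A)$ yields the claim.

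**The main thing to be careful about** is measurability and the well-definedness of the measures involved. Since $f,g \in \mathcal{W} \subseteq \mathcal{M}(\Omega,\mu)$ are measurable, their supports are measurable sets, as is the support of $f-g$; this is what makes the expression $\mu(\operatorname{supp}(\cdot))$ meaningful and lets us invoke monotonicity and subadditivity. One should also note the inequality is vacuously safe when either support has infinite measure, since the right-hand side is then $+\infty$. **The only genuine subtlety**, as opposed to a real obstacle, is making sure that $f-g \in \mathcal{W}$ so that it is a legitimate competitor in the infimum defining $\operatorname{Spark}(A)$; this follows immediately from $\mathcal{W}$ being a vector space and $A$ being linear, which guarantees $A(f-g)=Af-Ag=0$.
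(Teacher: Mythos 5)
Your proposal is correct and takes essentially the same route as the paper: both observe that $f-g$ is a nonzero element of $\ker(A)$, bound $\operatorname{Spark}(A) \leq \mu(\operatorname{supp}(f-g))$ by definition of the spark, and conclude via the inclusion $\operatorname{supp}(f-g) \subseteq \operatorname{supp}(f) \cup \operatorname{supp}(g)$ together with monotonicity and subadditivity of $\mu$. Your added remarks on measurability, the infinite-measure case, and $f-g \in \mathcal{W}$ are sound points of care that the paper leaves implicit.
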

\begin{proof}
	Since $f-g \neq 0$ and $f-g \in \ker(A)$, we have 
	\begin{align*}
	\operatorname{Spark}(A) \leq \mu(\operatorname{supp}(f-g))\leq \mu(\operatorname{supp}(f) \cup \operatorname{supp}(g)) 
	\leq \mu(\operatorname{supp}(f)) + \mu(\operatorname{supp}(g)).
	\end{align*}
\end{proof}
We set of most general version of Problem \ref{P0}  as follows. 
\begin{problem}\label{CP0}
Let $A:\mathcal{W} \to \mathcal{V}$ be a linear map.  Given $v \in \mathcal{V}$, solve 	
	\begin{align*}
		\operatornamewithlimits{minimize}_{g \in \mathcal{W}}\mu(\operatorname{supp}(g)) \quad \text{ subject to } \quad Ag=v.
	\end{align*}
\end{problem}
Following is continuous  version of Theorem \ref{DE}.
\begin{theorem}\label{ADE} (\textbf{Continuous Donoho-Elad Spark Sparsity Theorem})
Let $A:\mathcal{W} \to \mathcal{V}$ be a linear map.
	\begin{enumerate}[\upshape(i)]
		\item Let $r \in [0, \infty).$ If
		\begin{align*}
			\operatorname{Spark}(A)>2r, 	
		\end{align*}
	then for every $v \in \mathcal{V}$, there exists atmost one vector $f\in \mathcal{W}$ such that 
	\begin{align*}
		v=Af \quad \text{satisfying} \quad \mu(\operatorname{supp}(f))\leq r.
	\end{align*}
		\item If $v \in \mathcal{V}$ can be written as  $	v=Af$ for some  $f\in \mathcal{W}$ satisfying 
		\begin{align*}
			\mu(\operatorname{supp}(f))<\frac{1}{2}\operatorname{Spark}(A),
		\end{align*}	
		then $f$ is the unique solution to Problem \ref{CP0}.
	\end{enumerate}
\end{theorem}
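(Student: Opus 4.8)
The plan is to obtain both parts as direct consequences of the Continuous Donoho-Elad Spark Uncertainty Principle established just above, exactly mirroring the discrete argument of Donoho and Elad. The only genuinely new ingredient, namely the transfer of the support-measure inequality into the measure-space setting, is already packaged inside that uncertainty principle, so what remains is essentially arithmetic bookkeeping with the number $\operatorname{Spark}(A)$ and a careful reading of the definition of a unique solution.

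For part (i), I would argue by contradiction. Suppose, for a fixed $v \in \mathcal{V}$, that there exist two \emph{distinct} vectors $f, g \in \mathcal{W}$ with $Af = Ag = v$ and $\mu(\operatorname{supp}(f)) \leq r$, $\mu(\operatorname{supp}(g)) \leq r$. Since $f \neq g$ and $Af = Ag$, the uncertainty principle gives
\begin{align*}
	\operatorname{Spark}(A) \leq \mu(\operatorname{supp}(f)) + \mu(\operatorname{supp}(g)) \leq 2r,
\end{align*}
contradicting the hypothesis $\operatorname{Spark}(A) > 2r$. Hence for each $v$ at most one admissible $f$ can exist.

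For part (ii), I would verify the two conditions that define a unique solution of Problem \ref{CP0}. The first, $Af = v$, is the hypothesis. For the second, let $g \in \mathcal{W}$ be any vector with $g \neq f$ and $Ag = v$; I must show $\mu(\operatorname{supp}(g)) > \mu(\operatorname{supp}(f))$. Applying the uncertainty principle to the distinct pair $f, g$ and then invoking $\mu(\operatorname{supp}(f)) < \tfrac{1}{2}\operatorname{Spark}(A)$ yields
\begin{align*}
	\mu(\operatorname{supp}(g)) \geq \operatorname{Spark}(A) - \mu(\operatorname{supp}(f)) > \tfrac{1}{2}\operatorname{Spark}(A) > \mu(\operatorname{supp}(f)),
\end{align*}
which is precisely the strict inequality required. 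Thus $f$ is the unique solution.

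I do not anticipate a serious obstacle here, since the substantive work was discharged in proving the uncertainty principle; the only point demanding care is that $\operatorname{Spark}(A)$ is defined as an infimum which may fail to be attained and may equal $+\infty$ (for instance when $\ker(A) = \{0\}$). Each displayed chain above uses only one-sided comparisons with $\operatorname{Spark}(A)$, so non-attainment of the infimum is harmless; and when $\operatorname{Spark}(A) = +\infty$ the map $A$ is injective, so in both parts there is no admissible distinct competitor $g$ at all and uniqueness holds vacuously, the second display simply never being reached.
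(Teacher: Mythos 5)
Your proof is correct in its main lines and takes essentially the same route as the paper: the paper does not cite its uncertainty principle by name but simply re-runs the identical chain $\operatorname{Spark}(A)\leq\mu(\operatorname{supp}(f-g))\leq\mu(\operatorname{supp}(f)\cup\operatorname{supp}(g))\leq\mu(\operatorname{supp}(f))+\mu(\operatorname{supp}(g))$ inline in both parts, so your appeal to the already-proved principle is the same argument, slightly better organized. One caveat: your closing remark that $\operatorname{Spark}(A)=+\infty$ forces $A$ to be injective is false in general. The spark is an infimum over nonzero kernel elements, and it also equals $+\infty$ when $\ker(A)\neq\{0\}$ but every nonzero element of the kernel has support of infinite measure (take $\Omega=\mathbb{R}$ with Lebesgue measure, $\mathcal{W}$ the constant functions, and $A=0$). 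In that situation your part (ii) display genuinely breaks at the step $\operatorname{Spark}(A)-\mu(\operatorname{supp}(f))>\frac{1}{2}\operatorname{Spark}(A)$, since both sides are $+\infty$, and your vacuousness escape does not apply because distinct competitors $g$ do exist. The repair is immediate, however: the hypothesis $\mu(\operatorname{supp}(f))<\frac{1}{2}\operatorname{Spark}(A)$ gives $\mu(\operatorname{supp}(f))<\infty$, so $\operatorname{Spark}(A)\leq\mu(\operatorname{supp}(f))+\mu(\operatorname{supp}(g))$ forces $\mu(\operatorname{supp}(g))=\infty>\mu(\operatorname{supp}(f))$, which is the required strict inequality. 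Part (i) needs no such care, since your contradiction $\operatorname{Spark}(A)\leq 2r<\infty$ is valid in the extended reals. Worth noting: the paper's own proof of (ii) silently has the same wrinkle (its strict-inequality chain also presumes $\operatorname{Spark}(A)<\infty$), so you are more attentive to the edge case than the source is, just with the wrong patch for it.
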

\begin{proof}
	\begin{enumerate}[\upshape(i)]
		\item 	Let $r \in [0, \infty)$ and  $v \in \mathcal{V}$.	Let $g, h\in \mathcal{W}$ satisfy 	$v=Ag=Ah$ and $\mu(\operatorname{supp}(g))\leq r$,  $\mu(\operatorname{supp}(h))\leq r$.    We claim that $g=h$. If this is not true, then $g-h\neq 0$. Then   $ g-h\in \ker(A)$ with $g-h\neq 0$. But then 
		\begin{align*}
			2r&<\operatorname{Spark}(A)\leq \mu(\operatorname{supp}(g-h))\leq \mu(\operatorname{supp}(g) \cup \operatorname{supp}(h))\\
			&\leq \mu(\operatorname{supp}(g)) + \mu(\operatorname{supp}(h))\leq r+r=2r
		\end{align*}
		which is impossible. Hence claim holds.
		 	\item Let $v \in \mathcal{V}$ and  $f \in \mathcal{W}$ satisfies 	
		 		\begin{align*}
		 		\mu(\operatorname{supp}(f))<\frac{1}{2}\operatorname{Spark}(A).
		 	\end{align*}	
		 	Let $g\in \mathcal{W}$ be such that  	$v=Ag$ and $g\neq f$. 
		 		 Then we have 
		\begin{align*}
			A(f-g)=v-v=0.
		\end{align*}
		Hence $f-g \in \ker(A)$ and $f-g\neq 0$. Definition of  spark then gives
		\begin{align*}
			\operatorname{Spark}(A)&\leq \mu(\operatorname{supp}(f-g))\leq \mu(\operatorname{supp}(f))+ \mu(\operatorname{supp}(g))\\
			&<\frac{1}{2}\operatorname{Spark}(A)+\mu(\operatorname{supp}(g)).
		\end{align*}
		Therefore 
		\begin{align*}
			\mu(\operatorname{supp}(f))<\frac{1}{2}\operatorname{Spark}(A)<\mu(\operatorname{supp}(g)).	
		\end{align*}
		Hence $f$ is unique solution to Problem \ref{CP0}.
	\end{enumerate}
\end{proof}
In view of Theorem \ref{DE}, we have following problem: For which measure spaces, the converse of (i) in Theorem  \ref{ADE} holds? Note that the proof of converse of (i) in Theorem  \ref{DE} is based on the technique of writing a $2k$-sparse vector as a difference of two $k$-sparse vectors \cite{DAVENPORTDUARTEELDARKUTYNIOK} which we are unable do in continuous setting.

 \bibliographystyle{plain}
 \bibliography{reference.bib}

\end{document}